\newtheorem{thm}{Theorem}[section]
\newtheorem{pro}[thm]{Proposition}
\theoremstyle{definition}
\newtheorem{exa}[thm]{Example}
\newtheorem{rem}[thm]{Remark}
\begin{document}

\begin{center}
{\Large A variational formula on the Cram\'er function of series of independent random variables}
\end{center}
\begin{center}
{\sc Krzysztof Zajkowski}\\
Institute of Mathematics, University of Bialystok \\ 
Akademicka 2, 15-267 Bialystok, Poland \\ 
kryza@math.uwb.edu.pl 
\end{center}

\begin{abstract}
In \cite{Zaj1} it has been proved some variational formula on the Legendre-Fenchel transform of the cumulant generating function (the Cram\'er function) of Rademacher series 
with coefficients in the space $\ell^1$. In this paper we show a generalization of this formula to series of a larger class of any independent random variables with coefficients that belong to the space $\ell^2$.
\end{abstract}

{\it 2010 Mathematics Subject Classification:} 44A15 

{\it Key words: Cram\'er function, Legendre-Fenchel transform, contraction principle (large deviations theory), rate function, infimal convolution} 

\section{Introduction}

The {\it Legendre-Fenchel transforms} of {\it cumulant generating functions} of  given random variables  are at the core of the {\it large deviations theory} (see e.g. \cite{DemZei,Holl}). The Cram\'er function gives the rate of the exponentially decay of tails of distributions for the empirical means of sequences of i.i.d. random variables. It provides a nice connection between convex analysis and statistics.

Donsker and Varadhan in \cite{DiV} 
proved  that the Legendre-Fenchel transform $\psi_X^\ast$ of the cumulant generating function $\psi_X(s)=\ln Ee^{sX}$ of a random variable $X$  satisfies the following variational principle
\begin{equation}
\label{relE}
\psi_X^\ast(a)=\inf_{m\ll \mu_X,\;\int x dm=a}D(m\Vert \mu_X), 
\end{equation}
where $D(m\Vert \mu_X)=\int\ln\frac{dm}{d\mu_X}dm$ is the {\it relative entropy} of a probability distribution $m$ with respect to the distribution $\mu_X$ of $X$.

The aim of this paper is to prove some variational formula for the Cram\'er functions of series of independent random variables  that depends on coefficients and Cram\'er functions of summands of a given series;
see Theorem \ref{mtw}.

For series $\sum t_ig_i$ of independent standard normal r.v.s, where $\sum t_i^2<\infty$ and $g_i\in\mathcal{N}(0,1)$, it is known their tail estimations of the form
$$
Pr\Big(\sum t_ig_i > \alpha\Big)\le \exp\Big(-\frac{\alpha^2}{2\sum t_i^2}\Big);
$$
see for instance \cite{Tal2}.
The function $\frac{\alpha^2}{2\sum t_i^2}$ is the  Cram\'er function of the random series $\sum t_ig_i$ (see Ex.\ref{sect2})).

To realize  our purposes we will need the general notion of the Legendre-Fenchel transform  in topological spaces (see \cite{EkeTem} or \cite{BarPre}).
Let $V$ be a real locally convex Hausdorff space and $V^*$ its dual space. By $\left\langle\cdot,\cdot\right\rangle$ we denote the canonical pairing between $V$ and $V^*$. Let
$f:V\mapsto \mathbb{R}\cup\{\infty\}$ be a function nonidentically $\infty$. 
By $\mathcal{D}(f)$ we denote   the {\it effective domain} of $f$, i.e. $\mathcal{D}(f)=\{u\in V:\;f(u)<\infty\}$.
 A function $f^\ast:V^\ast\mapsto \mathbb{R}\cup\{\infty\}$  defined by
$$
f^\ast(u^*)=\sup_{u\in V}\{\left\langle u,u^*\right\rangle-f(u)\}=\sup_{u\in \mathcal{D}(f)}\{\left\langle u,u^*\right\rangle-f(u)\}\;\;\;\;\;(u^*\in V^\ast)
$$
is called   the {\it Legendre-Fenchel transform} ({\it convex conjugate}) of  $f$ and   
a function $f^{\ast\ast}:V\mapsto \mathbb{R}\cup\{\infty\}$ defined by 
$$
f^{\ast\ast}(u)=\sup_{u^\ast\in V^\ast}\{\left\langle u,u^\ast\right\rangle-f^\ast(u^\ast)\}=\sup_{u^\ast\in \mathcal{D}(f^\ast)}\{\left\langle u,u^*\right\rangle-f^\ast(u^\ast)\}\;\;\;\;\;(u\in V)
$$
is called the {\it convex biconjugate} of $f$.

The functions $f^\ast$ and $f^{\ast\ast}$ are convex and lower semicontinuous in the weak* and weak topology on $V^\ast$ and $V$, respectively. Moreover,
the  {\it biconjugate theorem} states that the function $f:V\mapsto \mathbb{R}\cup\{\infty\}$  not identically equal to $+\infty$ is convex and lower semicontinuous if and only if 
$f=f^{\ast\ast}$.

Let us mention additional properties of the convex conjugates; see 4.3 Examples in \cite{EkeTem}. Let $V$ be a normed space. We denote by $\Vert\cdot\Vert$ the norm of $V$ and by $\Vert\cdot\Vert_\ast$ the norm of $V^\ast$. For conjugates exponents $p,q\in(1,\infty)$ ($\frac{1}{p}+\frac{1}{q}=1$), a function
$\frac{1}{q}\Vert u^\ast \Vert_\ast^q$
is the convex conjugate of $\frac{1}{p}\Vert u \Vert^p$. 
\begin{rem}
\label{rem1}
Let us emphasize that in Hilbert spaces a function $\frac{1}{2}\Vert u \Vert^2$
one can treat as  the function invariant with respect to the Legendre-Fenchel transform.
\end{rem}
Let us list two properties and the notion of the infimal convolution. The convex-conjugation is order-reversing:
\begin{equation}
\label{orev}
{\rm if} \;f\le g\; {\rm then}\; f^\ast\ge g^\ast
\end{equation} 
and 
\begin{equation}
\label{jedn}
{\rm if}\; g(u)=f(au)\; {\rm where}\; (a\neq 0)\; {\rm then} \;g^\ast(u^\ast)=f^\ast\Big(\frac{u^\ast}{a}\Big).
\end{equation}
Let functions $f_1,...,f_n$ are convex, lower semicontinuous and not identically equal to $+\infty$. Suppose there is a point in $\bigcap_{i=1}^n\mathcal{D}(f_i)$ at which $f_1,...,f_{n-1}$ are continuous. Then   the convex conjugate of their sum is given by the so called infimal (in this case even minimal) convolution, i.e.
$$
(f_1+...+f_n)^\ast(u^\ast)=\min_{u^\ast_1+...+u^\ast_n=u^\ast}\{f_1^\ast(u^\ast_1)+...+f_n^\ast(u^\ast_n)\}
$$ 
(see e.g. \cite[Th.1]{HiU}).

It will turn out (see Rem.\ref{uw}) that the variational formula for the Cram\'er function of series of independent random variables is an example of an application of a generalization of the infimal convolution to the case of infinite numbers of summands. Often generalizations of formulas from finite numbers parameters (variables) to the case of infinite ones are not obvious. Other examples of a generalizations of the convex conjugates of the logarithm of  series of analytic functions, 
with applications to investigations of the convex conjugates of the spectral radius of the functions of weighted composition operators, 
one can find in \cite{OZ3,Zaj2}.

\section{Main Theorem}
\label{sect3}
The cumulant generating function $\psi_X(s)=\ln Ee^{sX}$ of any random variable $X$ is convex and lower semicontinuous on $\mathbb{R}$ (analytic on $int \mathcal{D}(\psi_X)$). It maps $\mathbb{R}$ into $\mathbb{R}\cup \{\infty\}$ and takes value zero at zero but it is possible that $\psi_X(s)=\infty$ when $s\neq 0$. We will assume that it is finite on some neighborhood of zero, i.e. $X$  satisfies condition: $\exists_{\lambda>0}$ s.t. $Ee^{\lambda|X|}<\infty$.
Let us emphasize that if $EX=0$ then $\psi_X\ge 0$ but the Cram\'er transform $\psi_X^\ast$ is always nonnegative and attains $0$ at the value $EX$.

Let $I\subset \mathbb{N}$ and  $(X_i)_{i\in I}$ be a sequence of independent r.v.s. For ${\bf t}=(t_i)_{i\in I}\in\ell^2(I)\equiv\ell^2$
consider $X_{\bf t}=\sum_{i\in I}t_i X_i$ (convergence in $L^2$ and almost surely). Observe that $EX_{\bf t}^2=\sum_{i\in I}t_i^2=\Vert {\bf t} \Vert^2$. 
The cumulant generating function of $X_{\bf t}$ we will denote by $\psi_{\bf t}$, i.e. $\psi_{\bf t}=\psi_{X_{\bf t}}$. Notice that for fixed $s$ we can consider $\psi_{\bf t}(s)$ as a functional of the variable ${\bf t}$ in $\ell^2$.
We will denote it by $\psi^s$. Let us emphasize that $\psi^s({\bf t})=\psi_{\bf t}(s)$.

Before proving our main theorem, we show forms of the cumulant generating function and the Cram\'er transform of series of independent random variables. 
\begin{pro}
\label{formpsi}
Let $(X_i)_{i\in I}$ be a sequence of zero-mean independent random variables with common bounded second moments. Let for each $i\in I$ the cumulant generating function 
$\psi_i(s):=\psi_{X_i}(s)=\ln Ee^{sX_i}$ is finite on some neighborhood of zero. Then for each ${\bf t}=(t_i)_{i\in I}\in \ell^2$ the cumulant generating function of a random series $X_{\bf t}=\sum_{i\in I}t_iX_i$ is given by the following equality
$$
\psi_{\bf t}(s):=\psi_{X_{\bf t}}(s)=\sum_{i\in I}\psi_i(st_i).
$$
\end{pro}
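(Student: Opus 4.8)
The plan is to reduce everything to the finite-sum case, where independence gives additivity of cumulants at once, and then to pass to the limit $N\to\infty$ by squeezing $\psi_{\bf t}(s)$ between two bounds rather than by trying to justify $Ee^{sX_{\bf t}^N}\to Ee^{sX_{\bf t}}$ directly. First I would record that $X_{\bf t}$ is genuinely well defined: since the $X_i$ are independent, centered, with $EX_i^2\le C$ and ${\bf t}\in\ell^2$, we have $\sum_i t_i^2EX_i^2\le C\Vert{\bf t}\Vert^2<\infty$, so by Kolmogorov's theorem on sums of independent centered $L^2$ variables the partial sums $X_{\bf t}^N=\sum_{i\in I_N}t_iX_i$ converge both in $L^2$ and almost surely to $X_{\bf t}$. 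Writing $S_N:=\sum_{i\in I_N}\psi_i(st_i)$, independence of the finitely many $X_i$, $i\in I_N$, yields $Ee^{sX_{\bf t}^N}=\prod_{i\in I_N}Ee^{st_iX_i}$ and hence $\psi^s({\bf t}^N)=S_N$. Because each $X_i$ is centered, Jensen gives $\psi_i(st_i)\ge0$, so $S_N$ increases to $S_\infty:=\sum_{i\in I}\psi_i(st_i)\in[0,\infty]$; the whole point is to show $\psi_{\bf t}(s)=S_\infty$, the identity being understood also when both sides equal $+\infty$.

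For the lower bound I would split $X_{\bf t}=X_{\bf t}^N+R_N$ with $R_N=\sum_{i>N}t_iX_i$, noting that $X_{\bf t}^N$ and $R_N$ are independent and that $R_N$ is again centered. Since the integrands are nonnegative, Tonelli lets me factor $Ee^{sX_{\bf t}}=Ee^{sX_{\bf t}^N}\,Ee^{sR_N}$ as an identity in $[0,\infty]$, and Jensen gives $Ee^{sR_N}\ge e^{sER_N}=1$; hence $Ee^{sX_{\bf t}}\ge e^{S_N}$ for every $N$, and letting $N\to\infty$ gives $\psi_{\bf t}(s)\ge S_\infty$. For the matching upper bound I would use that $X_{\bf t}^N\to X_{\bf t}$ almost surely, so $e^{sX_{\bf t}^N}\to e^{sX_{\bf t}}$ almost surely with nonnegative terms; Fatou's lemma then yields $Ee^{sX_{\bf t}}\le\liminf_N Ee^{sX_{\bf t}^N}=\lim_N e^{S_N}=e^{S_\infty}$, i.e. $\psi_{\bf t}(s)\le S_\infty$. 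Combining the two inequalities closes the argument.

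The step I expect to be the main obstacle is precisely the interchange of limit and expectation, and the design of the argument above is to organize matters so as to sidestep its difficult half. A head-on attempt to prove $Ee^{sX_{\bf t}^N}\to Ee^{sX_{\bf t}}$ would call for uniform integrability of $(e^{sX_{\bf t}^N})_N$, i.e. for a uniform sub-Gaussian type majorant $\psi_i(u)\le Ku^2$ near $0$ valid for all $i$ simultaneously; but the hypotheses — bounded second moments together with mere finiteness of each individual $\psi_i$ near $0$ — do not by themselves supply such uniform control. Proving the two one-sided estimates separately (Tonelli and the elementary bound $Ee^{sR_N}\ge1$ for the lower one, Fatou for the upper one) avoids any uniform integrability argument, and as a bonus keeps the identity valid with no extra hypothesis even in the degenerate regime $S_\infty=\infty$. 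The secondary point I would check carefully is the legitimacy of the factorization and of Fatou for possibly non-integrable expressions, which is why I would phrase every product and every limit as a statement in $[0,\infty]$ throughout.
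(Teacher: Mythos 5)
Your proof is correct, and it takes a genuinely different route from the paper's. The paper argues functional-analytically: for fixed $s$ it shows that ${\bf t}\mapsto\psi^s({\bf t})$ is convex (H\"older's inequality) and lower semicontinuous (Fatou's lemma) on $\ell^2$, establishes the same two properties for the candidate functional ${\bf t}\mapsto\sum_{i\in I}\psi_i(st_i)$, checks that the two coincide on the dense subspace $\ell_0$ of finitely supported sequences, and concludes that they are equal everywhere. Your argument instead fixes a single ${\bf t}$ and squeezes: additivity for finite sections gives $\psi^s({\bf t}^N)=S_N$; the factorization $Ee^{sX_{\bf t}}=Ee^{sX_{\bf t}^N}\,Ee^{sR_N}$ together with Jensen's bound $Ee^{sR_N}\ge 1$ gives $\psi_{\bf t}(s)\ge\sup_N S_N=S_\infty$; and Fatou along the a.s.\ convergence $X_{\bf t}^N\to X_{\bf t}$ gives $\psi_{\bf t}(s)\le S_\infty$. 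Your route is more elementary and, notably, more airtight: the paper's final step rests on the principle that two convex lower semicontinuous functionals agreeing on a dense subspace must agree everywhere, and that principle is false in this generality --- lower semicontinuity only bounds the value at a point from above by the liminf over the dense set, never from below; for instance, the convex indicator functions (in the sense of convex analysis) of the closed convex sets $\{{\bf 0}\}$ and $\{\lambda{\bf v}:\,0\le\lambda\le 1\}$, with ${\bf v}\in\ell^2\setminus\ell_0$, are both convex and lower semicontinuous and coincide on $\ell_0$, yet they differ at ${\bf v}$. The paper's conclusion is of course true, and your two one-sided estimates in effect supply the monotone-approximation argument that the density step glosses over; moreover your version delivers the identity cleanly as a statement in $[0,\infty]$. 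What the paper's route buys in exchange is that the convexity and lower semicontinuity of $\psi^s$ on $\ell^2$ --- which are needed again in the proof of Theorem \ref{mtw}, where the biconjugate theorem is applied to $\psi^s$ --- are established along the way; if one substitutes your proof, these properties still follow immediately from the resulting representation $\psi^s({\bf t})=\sum_{i\in I}\psi_i(st_i)$, since each summand is convex and lower semicontinuous and all terms are nonnegative, so nothing downstream is lost.
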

\begin{proof}
Because $(X_i)$ are independent, centered and have  common bounded second moments then for every ${\bf t}\in\ell^2$ the series $X_{\bf t}=\sum_{i\in I}t_iX_i$ converges in $L^2$ and a.s.. Let us emphasize that the convergence of series $X_{\bf t}$ in $L^2$ is equivalent to the convergence of sequences ${\bf t}$ in $\ell^2$. By fixed $s$ we can consider the cumulant generating function $\psi_{\bf t}(s)$ as a functional of the variable  ${\bf t}$ in $\ell^2$. We will denote it by $\psi^s({\bf t})$, i.e. $\psi^s({\bf t})=\psi_{\bf t}(s)$. We show that for every $s\in\mathbb{R}$ the functional $\psi^s$ is convex and lower semicontinuous on $\ell^2$.

Convexity one may check by using the H\"older inequality. Let ${\bf t},{\bf u}\in \ell^2$ and $\lambda\in(0,1)$
then 
\begin{eqnarray*}
\psi^s(\lambda{\bf t}+(1-\lambda){\bf u}) & =& \ln Ee^{s\sum_{i\in I}(\lambda t_i+(1-\lambda) u_i)X_i}\\
\; &=& \ln E\big[\big(e^{s\sum_{i\in I}t_iX_i}\big)^\lambda\big(e^{s\sum_{i\in I}u_iX_i}\big)^{1-\lambda}\big]
=\ln E\big[\big(e^{sX_{\bf t}}\big)^\lambda\big(e^{sX_{\bf u}}\big)^{1-\lambda}\big].
\end{eqnarray*}
By the H\"older inequality for exponents $1/\lambda$ and $1/(1-\lambda)$ we get
$$
E\big[\big(e^{sX_{\bf t}}\big)^\lambda\big(e^{sX_{\bf u}}\big)^{1-\lambda}\big]
\le \big(Ee^{sX_{\bf t}}\big)^\lambda\big(Ee^{sX_{\bf u}}\big)^{1-\lambda}
$$
and, in consequence,
\begin{eqnarray*}
\psi^s(\lambda{\bf t}+(1-\lambda){\bf u})& \le &\lambda\ln Ee^{sX_{\bf t}} + (1-\lambda)\ln Ee^{sX_{\bf u}}\\
\; & = & \lambda\psi^s({\bf t})+(1-\lambda)\psi^s({\bf u}).
\end{eqnarray*}

Lower semicontinuity follows from Fatou's lemma. Let ${\bf t}^n\to {\bf t}^0$ in $\ell^2$. Note that $X_{{\bf t}^n}$ converges a.s. to $X_{{\bf t}^0}$. Then
\begin{eqnarray*}
\liminf_{n\to\infty}\psi^s({\bf t}^n)  =  \liminf_{n\to\infty}\ln Ee^{sX_{{\bf t}^n}} & \ge & \ln E(\liminf_{n\to\infty}e^{sX_{{\bf t}^n}})\\
\; & = & \ln E(e^{s\lim_{n\to\infty}X_{{\bf t}^n}})=\psi^s({\bf t}^0).
\end{eqnarray*} 
It means that $\psi^s$ is lower semicontinuous on $\ell^2$.

Let $\ell_0$ denote the space of sequences with  finite supports. Observe that $\ell_0$ is a dense subset of $\ell^2$. 
For ${\bf t}\in\ell_0$ we have
\begin{eqnarray*}
\psi^s({\bf t}) & = & \ln Ee^{sX_{\bf t}} = \ln\prod_{i\in I}e^{st_iX_i}\\
\; & = & \sum_{i\in I}\psi_i(st_i).
\end{eqnarray*}
For ${\bf t}\in\ell^2$ consider a series $\sum_{i\in I}\psi_i(st_i)$. Since $EX_i=0$, $\psi_i\ge 0$. It follows that $\sum_{i\in I}\psi_i(st_i)$ is convergent or divergent to plus infinity. Since $\psi_i$ are convex, this series defines a convex function on  the whole $\ell^2$. Let ${\bf t}^n\to {\bf t}^0$ in $\ell^2$. 
Hence for every $i\in I$
$t^n_i\to t^0_i$. By superadditivity of the limit inferior and, next, by lower semicontinuity of each $\psi_i$, we get 
\begin{eqnarray*}
\liminf_{n\to\infty}\psi^s({\bf t}^n)  =  \liminf_{n\to\infty}\sum_{i\in I}\psi_i(st^n_i) & \ge & \sum_{i\in I}\liminf_{n\to\infty}\psi_i(st^n_i)\\
\; & \ge & \sum_{i\in I}\psi_i(st^0_i)=\psi^s({\bf t}^0).
\end{eqnarray*} 
Notice that both functions:  $\psi^s({\bf t})$ and the series $\sum_{i\in I}\psi_i(st_i)$ are convex and lower semicontinuous on $\ell^2$
and moreover coincide on $\ell_0$ (a dense subset of $\ell^2$). It follows that these functions are equal on whole $\ell^2$, i.e.
$$
\psi^s({\bf t})=\sum_{i\in I}\psi_i(st_i)
$$
for every ${\bf t}$ in $\ell^2$.
\end{proof}

Let us observe that for $s=0$ $\psi^0\equiv 0$ and its convex conjugate $(\psi^0)^\ast({\bf a})=0$ for ${\bf a}={\bf 0}$ and $\infty$ otherwise. From now on we assume that $s\neq 0$. A form of $(\psi^s)^\ast$ for $s\neq 0$ is described in the  following:
\begin{pro}
Under the assumptions of Proposition \ref{formpsi}. The convex conjugate of $\psi^s({\bf t})=\sum_{i\in I}\psi_i(st_i)$ defined on $\ell^2$ equals
$$
(\psi^s)^\ast({\bf a})=\sum_{i\in I}\psi_i^\ast\Big(\frac{a_i}{s}\Big)\quad (s\neq 0)
$$
for ${\bf a}\in\ell^2$, where $\psi^\ast_i$'s are the Cram\'er transforms of $X_i$'s.
\end{pro}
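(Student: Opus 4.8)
The plan is to prove the two inequalities $(\psi^s)^\ast(\mathbf{a})\le\sum_{i\in I}\psi_i^\ast(a_i/s)$ and $(\psi^s)^\ast(\mathbf{a})\ge\sum_{i\in I}\psi_i^\ast(a_i/s)$ separately, writing $\Phi(\mathbf{a}):=\sum_{i\in I}\psi_i^\ast(a_i/s)$ for brevity. The crucial structural fact I would record at the outset is that, since $EX_i=0$, each Cram\'er transform $\psi_i^\ast$ is nonnegative and vanishes at $0$; hence $\Phi$ is a sum of nonnegative terms, its value in $[0,\infty]$ is unambiguous, and the partial sums $\sum_{i\in I_N}\psi_i^\ast(a_i/s)$ increase to $\Phi(\mathbf{a})$ as $N\to\infty$. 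This monotonicity is what will ultimately let me pass from finite index sets to the full set $I$.

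For the upper bound I would fix $\mathbf{a}\in\ell^2$ and consider any $\mathbf{t}\in\ell^2$ with $\psi^s(\mathbf{t})<\infty$ (points with $\psi^s(\mathbf{t})=\infty$ contribute $-\infty$ and may be discarded). For such $\mathbf{t}$ both series converge and $\langle\mathbf{t},\mathbf{a}\rangle-\psi^s(\mathbf{t})=\sum_{i\in I}\big(t_ia_i-\psi_i(st_i)\big)$. Substituting $r=st_i$ in the definition of the conjugate gives, coordinatewise, $t_ia_i-\psi_i(st_i)=r\,\tfrac{a_i}{s}-\psi_i(r)\le\psi_i^\ast(a_i/s)$. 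Summing over $i$ and taking the supremum over $\mathbf{t}$ yields $(\psi^s)^\ast(\mathbf{a})\le\Phi(\mathbf{a})$.

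For the lower bound I would restrict the defining supremum to sequences $\mathbf{t}\in\ell_0$ supported in a fixed finite set $I_N$; this only decreases the supremum, so it provides a lower bound for $(\psi^s)^\ast(\mathbf{a})$. On the finite product $\ell^2(I_N)\cong\mathbb{R}^{|I_N|}$ the functional $\mathbf{t}\mapsto\sum_{i\in I_N}\psi_i(st_i)$ is a sum of functions of separate coordinates, so iterating Lemma \ref{lem} identifies its conjugate as $\sum_{i\in I_N}\big(\psi_i(s\,\cdot)\big)^\ast(a_i)$, and property (\ref{jedn}) with $f=\psi_i$ and $a=s$ turns each summand into $\psi_i^\ast(a_i/s)$. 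Hence $(\psi^s)^\ast(\mathbf{a})\ge\sum_{i\in I_N}\psi_i^\ast(a_i/s)$ for every $N$. Letting $N\to\infty$ and invoking the monotone increase of the partial sums recorded above gives $(\psi^s)^\ast(\mathbf{a})\ge\Phi(\mathbf{a})$, and the two inequalities together finish the argument.

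The step I expect to be the genuine obstacle is the passage from finite index sets to the full set $I$, that is, the interchange of the supremum with an infinite summation; everything else is a coordinatewise computation combining Lemma \ref{lem} with (\ref{jedn}). The nonnegativity of the $\psi_i^\ast$ is exactly what makes this interchange legitimate, since it reduces the limit over $N$ to monotone convergence rather than a delicate uniform estimate, and it is the reason the zero-mean hypothesis of Proposition \ref{formpsi} is doing real work here.
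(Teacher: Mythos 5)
Your proof is correct, and it takes a genuinely different route from the paper's. Both arguments share the same finite-dimensional core (iterate Lemma \ref{lem} and apply property (\ref{jedn}) to identify the conjugate of $(t_i)_{i\in I_N}\mapsto\sum_{i\in I_N}\psi_i(st_i)$), but they differ in how they pass from finite index sets to all of $I$. The paper defines $\Phi(\mathbf{a})=\sum_{i\in I}\psi_i^\ast(a_i/s)$ on all of $\ell^2$, shows it is convex and lower semicontinuous (by the same device used for $\sum_{i\in I}\psi_i(st_i)$ in Proposition \ref{formpsi}), observes that it coincides with $(\psi^s)^\ast$ on the dense subspace $\ell_0$, and concludes that the two convex lower semicontinuous functionals must be equal everywhere. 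Your two-sided estimate --- termwise Fenchel--Young for $(\psi^s)^\ast\le\Phi$, restriction of the supremum to finitely supported $\mathbf{t}$ plus monotone convergence of nonnegative partial sums for $(\psi^s)^\ast\ge\Phi$ --- avoids both delicate points of that argument: you never need lower semicontinuity of $\Phi$, nor the principle that convex lower semicontinuous functionals agreeing on a dense subspace coincide (a principle the paper invokes without proof and which requires real care for extended-valued convex functions in infinite dimensions); moreover, your lower bound in effect supplies the justification, glossed over in the paper, that $(\psi^s)^\ast$ restricted to $\ell_0$ is really given by the finite-dimensional computation (for $\mathbf{a}\in\ell_0$ the supremum defining $(\psi^s)^\ast(\mathbf{a})$ still ranges over all $\mathbf{t}\in\ell^2$, so the finite-$I$ case does not apply verbatim). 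In this sense your proof is more elementary and in places more complete than the paper's. One small correction to your closing remark: the nonnegativity of $\psi_i^\ast$ is not where the zero-mean hypothesis acts --- it holds for any random variable, since $\psi_i(0)=\ln Ee^{0}=0$ gives $\psi_i^\ast(x)\ge 0\cdot x-\psi_i(0)=0$. The centering hypothesis enters your argument only through Proposition \ref{formpsi} itself, i.e., through the identification $\psi^s(\mathbf{t})=\sum_{i\in I}\psi_i(st_i)$ that your computation takes as its starting point.
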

\begin{proof}
The convex conjugate $(\psi^s)^\ast$ is convex and lower semicontinuous on $(\ell^2)^\ast\simeq \ell^2$. Assume first that $I$ is a finite set. By virtue of the form of $\psi^s$, the convex conjugate of a separated sum (see e.g. \cite[Prop.13.27]{BaCom}) and the property (\ref{jedn}),
for ${\bf a}$ in $\ell^2(I)$, we get
$$
(\psi^s)^\ast({\bf a})=\sum_{i\in I}\psi_i^\ast\Big(\frac{a_i}{s}\Big)\quad (s\neq 0).
$$
Define now a functional $\sum_{i\in I}\psi_i^\ast(\frac{a_i}{s})$ on whole space $\ell^2$. Since $\psi^\ast_i$'s are convex and lower semicontinuous, this functional is convex and, similarly as in the case of $\sum_{i\in I}\psi_i$, one can show that it is also lower semicontinuous on $\ell^2$. Because this functional coincides with $(\psi^s)^\ast$ on the dense subspace $\ell_0$ then both functionals are equal on $\ell^2$. 
\end{proof}
Let us emphasize that the functions $(\psi^s)^\ast$ are nonnegative and lower semicontinuous. In large deviation theory such functions are called rate functions
(good rate functions when level sets are not only closed but also compact). In the main theorem below we show that the contraction principle applied to the function $(\psi^1)^\ast$ by using a functional  $\left\langle {\bf t}, \cdot\right\rangle$ over $\ell^2$ gives the Cram\'er transform of $X_{\bf t}$.
\begin{thm}
\label{mtw}
Let a sequence  of r.v.'s $(X_i)_{i\in I}$ satisfies the assumptions of Proposition \ref{formpsi}. 
Then for every 
${\bf t}=(t_i)_{i\in I}\in \ell^2$ the Cram\'er transform $\psi_{X_{\bf t}}^\ast= \psi_{\bf t}^\ast$ of a random series $X_{\bf t}=\sum_{i\in I}t_iX_i$ is given by the following variational formula
$$
\psi^\ast_{\bf t}(\alpha)=\inf_{\substack{{\bf b}\in \ell^2:\; \left\langle {\bf t}, {\bf b}\right\rangle=\alpha}}\sum_{i\in I}\psi^\ast_i(b_i),
$$
for $\alpha\in int \mathcal{D}(\psi_{\bf t}^\ast)$, where $\psi^\ast_i$'s are the Cram\'er transform of $X_i$'s.
\end{thm}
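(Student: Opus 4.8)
The plan is to follow the same route as the model example of Section \ref{sect2}, replacing the explicit Gaussian computation by the two propositions just proved. The starting point is the biconjugate theorem applied to the functional $\psi^s$ on $\ell^2$ for fixed $s\neq 0$. Since $\psi^s({\bf t})=\sum_{i\in I}\psi_i(st_i)$ is convex and lower semicontinuous on $\ell^2$ (established in the proof of Proposition \ref{formpsi}), we have $\psi^s=(\psi^s)^{\ast\ast}$, i.e.
$$
\psi^s({\bf t})=\sup_{{\bf a}\in\ell^2}\{\langle {\bf t},{\bf a}\rangle-(\psi^s)^\ast({\bf a})\}.
$$
Substituting the form $(\psi^s)^\ast({\bf a})=\sum_{i\in I}\psi_i^\ast(a_i/s)$ from the preceding proposition and then changing variables ${\bf a}=s{\bf b}$ (legitimate since $s\neq 0$) recasts the right-hand side as
$$
\psi_{\bf t}(s)=\sup_{{\bf b}\in\ell^2}\Big\{s\langle {\bf t},{\bf b}\rangle-\sum_{i\in I}\psi_i^\ast(b_i)\Big\}.
$$

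Next I would slice the supremum over ${\bf b}$ by the value $\alpha=\langle {\bf t},{\bf b}\rangle$, exactly as in the passage leading to (\ref{form1}). Because $s\langle {\bf t},{\bf b}\rangle=s\alpha$ is constant on each hyperplane $\{{\bf b}\in\ell^2:\langle {\bf t},{\bf b}\rangle=\alpha\}$, it may be pulled out of the inner supremum, giving
$$
\psi_{\bf t}(s)=\sup_{\alpha\in\mathbb{R}}\Big\{s\alpha-\inf_{\substack{{\bf b}\in\ell^2:\\ \langle {\bf t},{\bf b}\rangle=\alpha}}\sum_{i\in I}\psi_i^\ast(b_i)\Big\}.
$$
Writing $\Lambda(\alpha):=\inf\{\sum_{i\in I}\psi_i^\ast(b_i):\langle {\bf t},{\bf b}\rangle=\alpha\}$ for the claimed right-hand side, this identity says precisely that $\psi_{\bf t}=\Lambda^\ast$ as functions on $\mathbb{R}$ (the omitted case $s=0$ is immediate, since $\psi_{\bf t}(0)=0=\Lambda^\ast(0)$, the nonnegativity of the $\psi_i^\ast$ forcing $\inf_\alpha\Lambda(\alpha)=\Lambda(0)=0$). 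Taking convex conjugates of both sides yields $\psi_{\bf t}^\ast=\Lambda^{\ast\ast}$.

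It then remains to identify $\Lambda^{\ast\ast}$ with $\Lambda$. The function $\Lambda$ is the infimal projection of the convex function ${\bf b}\mapsto\sum_{i\in I}\psi_i^\ast(b_i)$ along the linear constraint $\langle {\bf t},{\bf b}\rangle=\alpha$; a direct convexity check (convex-combine two feasible points and invoke convexity of the integrand) shows $\Lambda$ is convex on $\mathbb{R}$. For a convex function on $\mathbb{R}$, $\Lambda^{\ast\ast}$ is its lower semicontinuous hull, which coincides with $\Lambda$ throughout the interior of its effective domain. Since $\mathcal{D}(\Lambda)$ and $\mathcal{D}(\Lambda^{\ast\ast})=\mathcal{D}(\psi_{\bf t}^\ast)$ share the same interior, for every $\alpha\in int\,\mathcal{D}(\psi_{\bf t}^\ast)$ we obtain $\psi_{\bf t}^\ast(\alpha)=\Lambda^{\ast\ast}(\alpha)=\Lambda(\alpha)$, which is the asserted formula.

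I expect the genuinely delicate step to be this last one, the passage from $\psi_{\bf t}^\ast=\Lambda^{\ast\ast}$ to $\psi_{\bf t}^\ast=\Lambda$. Infimal projections of lower semicontinuous convex functions need not remain lower semicontinuous, so $\Lambda=\Lambda^{\ast\ast}$ can fail at boundary points of the domain; this is exactly why the statement is restricted to $int\,\mathcal{D}(\psi_{\bf t}^\ast)$, and it mirrors the familiar phenomenon in the contraction principle of large deviations theory, where the contracted rate function must a priori be replaced by its lower semicontinuous regularization. Everything preceding that step is a formal manipulation of suprema combined with the two propositions already established.
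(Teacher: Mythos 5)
Your proposal is correct and follows essentially the same route as the paper's proof: the biconjugate theorem for $\psi^s$ on $\ell^2$, the substitution ${\bf a}=s{\bf b}$, the slicing of the supremum over the hyperplanes $\{\left\langle {\bf t},{\bf b}\right\rangle=\alpha\}$, convexity of the infimal projection, and identification with $\psi_{\bf t}^\ast$ on $int\,\mathcal{D}(\psi_{\bf t}^\ast)$. If anything, you are slightly more careful than the paper — you check the $s=0$ case explicitly and argue convexity of $\Lambda$ directly rather than citing den Hollander's contraction result — but these are refinements of the same argument, not a different one.
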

\begin{proof}

The functional $\psi^s$ is convex and lower semicontinuous on $\ell^2$. By virtue of the biconjugate theorem we have
$$
\psi^s({\bf t})=\sup_{{\bf a}\in \ell^2}\{\left\langle {\bf t}, {\bf a} \right\rangle-(\psi^s)^\ast({\bf a})\},
$$
where $(\psi^s)^\ast({\bf a})=\sum_{i\in I}(\psi_i)^\ast(\frac{a_i}{s})$ $(s\neq 0)$.
Substituting ${\bf a}=s{\bf b}$ we get 
$$
(\psi^s)^\ast({\bf a})=\sum_{i\in I}\psi_i^\ast\big(\frac{a_i}{s}\big)=\sum_{i\in I}\psi_i^\ast(b_i)=(\psi^1)^\ast({\bf b})
$$
and we can rewrite the above as follows
\begin{equation}
\label{row1}
\psi^s({\bf t})=\sup_{{\bf b}\in \ell^2}\{s\left\langle {\bf t}, {\bf b}\right\rangle-(\psi^1)^\ast({\bf b})\}.
\end{equation}

Let us return to the function $\psi_{\bf t}$ which is convex and lower semicontinuous on $\mathbb{R}$. By the biconjugate theorem we have
$$
\psi_{\bf t}(s)=\sup_{\alpha\in\mathbb{R}}\{s\alpha-\psi_{\bf t}^\ast(\alpha)\}.
$$
Let us recall that $\psi_{\bf t}(s)=\psi^s({\bf t})$. If we split the supremum of (\ref{row1}) into two parts: over $\mathbb{R}$ and hyperplanes $\{{\bf b}\in\ell^2:\; \left\langle {\bf b}, {\bf t}\right\rangle=constant\}$ then we get
\begin{eqnarray*}
\psi_{\bf t}(s) & = & \psi^s({\bf t})=\sup_{\alpha\in\mathbb{R}}\sup_{\substack{{\bf b}\in \ell^2:\\ \left\langle  {\bf t}, {\bf b}\right\rangle=\alpha}}\{s\left\langle  {\bf t}, {\bf b}\right\rangle-(\psi^1)^\ast({\bf b})\}\\
\; & = & \sup_{\alpha\in\mathbb{R}}\{s\alpha-\inf_{\substack{{\bf b}\in \ell^2:\; \left\langle  {\bf t}, {\bf b}\right\rangle=\alpha}}(\psi^1)^\ast({\bf b})\}.
\end{eqnarray*}
Let 
$\varphi_{\bf t}(\alpha)$ denote the function $\inf_{\substack{{\bf b}\in \ell^2:\; \left\langle  {\bf t}, {\bf b}\right\rangle=\alpha}}(\psi^1)^\ast({\bf b})$.
The functional $(\psi^1)^\ast$ is convex on $\ell^2$. Convexity is preserved under contraction by linear transformation (see \cite[Th.III.32]{Holl}). It suffices to state that $\varphi_{\bf t}$ and $\psi_{\bf t}^\ast$ coincide on $int \mathcal{D}(\psi_{\bf t}^\ast)$ (both ones take $\infty$ on a complement of 
$cl\mathcal{D}(\psi_{\bf t}^\ast)$)
that is 
$$
\psi^\ast_{\bf t}(\alpha)=\inf_{\substack{{\bf b}\in \ell^2:\; \left\langle  {\bf t}, {\bf b}\right\rangle=\alpha}}(\psi^1)^\ast({\bf b}),
$$
for $\alpha\in int \mathcal{D}(\psi_{\bf t}^\ast)$, where $(\psi^1)^\ast({\bf b})=\sum_{i\in I}\psi^\ast_i(b_i)$. 
\end{proof}
\begin{rem}
We cannot prove in general that $\varphi_{\bf t}$ is lower semicontinuous. Sometimes it is obvious when for instance the effective domain of $\psi_{\bf t}^\ast$ is open subset of $\mathbb{R}$ (or even whole $\mathbb{R}$; see  Examples \ref{sect2} and \ref{przy1}). Under an assumption that $(\psi^1)^\ast$ is a good rate function
with respect to weak* topology  we can state lower-semicontinuity of $\varphi_{\bf t}$ (see Example \ref{przy2}).
\end{rem}

\begin{exa}
\label{sect2}
The moment generating function of a standard normal r.v. $g\in\mathcal{N}(0,1)$ equals
$
Ee^{sg}=\frac{1}{\sqrt{2\pi}}\int_\mathbb{R}e^{st-\frac{t^2}{2}}dt=e^\frac{s^2}{2}
$
and its cumulant generating function
$$
\psi_g(s):=\ln Ee^{sg}=\frac{s^2}{2}.
$$
The function $\frac{s^2}{2}$ is invariant with respect to Legendre transform (see Remark \ref{rem1}) that is the Cram\'er function of $g$ to be
$$
\psi_g^\ast(\alpha)=\frac{\alpha^2}{2}.
$$
By virtue of Proposition \ref{formpsi} the cumulant generating function of the series $X_{\bf t}=\sum_{i\in I}t_ig_i$, where $g_i$ are independent and standard normal distributed,
equals
$$
\psi_{\bf t}(s)=\sum_{i\in I}\psi_g(st_i)=\frac{1}{2}s^2\sum_{i\in I}t_i^2=\frac{1}{2}s^2\Vert {\bf t} \Vert^2.
$$
By the scaling property (\ref{jedn}) we obtain the evident form of the Cram\'er transform
$$
\psi_{\bf t}^\ast(\alpha)=\frac{\alpha^2}{2\Vert {\bf t} \Vert^2}.
$$
On the other hand by Theorem \ref{mtw} we get
$$
\psi_{\bf t}^\ast(\alpha)=\frac{1}{2}\inf_{\substack{{\bf b}\in \ell^2:\\ \left\langle {\bf t},{\bf b}\right\rangle=\alpha}}\Vert {\bf b}\Vert^2.
$$

Using the Lagrange multipliers technique one can check that this infimum is attained at a sequence ${\bf b}=\Big(\frac{\alpha t_i}{\Vert \bf{t} \Vert^2}\Big)_{i\in I}$.
\end{exa}

\begin{exa}
\label{przy1}
Let $X$ be r.v. with the Laplace density $\frac{1}{2}e^{-|x|}$. Its moment generating function $Ee^{sX}=\frac{1}{1-s^2}$ for $|s|<1$ and $\infty$ otherwise. 
Let us observe that 
$$
Ee^{sX}=\frac{1}{1-s^2}=\sum_{n=0}^\infty s^{2n}\ge\sum_{n=0}^\infty\frac{s^{2n}}{n!2^n}=e^\frac{s^2}{2}=Ee^{sg},
$$
where $g$ is standard normal distributed. Thus $\psi_X\ge \psi_g$ and, since the Legendre-Fenchel transform is order-reversing, we have
\begin{equation}
\label{nier}
\psi_X^\ast(\alpha)\le \psi_g^\ast(\alpha)=\frac{\alpha^2}{2}.
\end{equation} 
Moreover by using the classical Legendre transform we can calculate an evident form of $\psi_X^\ast$ and get
$$
\psi_X^\ast(\alpha)=\frac{\alpha^2}{\sqrt{1+\alpha^2}+1}+\ln\frac{2}{\sqrt{1+\alpha^2}+1}.
$$

Consider a sequence $(X_i)_{i\in I}$ of independent r.v.s with the same Laplace distribution. 
By (\ref{nier}) we have
$$
(\psi^1)^\ast({\bf b})=\sum_{i\in I}\psi^\ast_X(b_i)\le \frac{1}{2}\Vert {\bf b} \Vert^2.
$$
It means that $(\psi^1)^\ast$ takes finite values on the whole space $\ell^2$ and for every $\alpha\in\mathbb{R}$ there is a finite infimum: 
$$
\inf_{\substack{{\bf b}\in \ell^2:\; \left\langle  {\bf t}, {\bf b}\right\rangle=\alpha}}\sum_{i\in I}\Big(\frac{b_i^2}{\sqrt{1+b_i^2}+1}+\ln\frac{2}{\sqrt{1+b_i^2}+1}\Big),
$$
that is the finite value of $\psi_{\bf t}^\ast$ at $\alpha$.
\end{exa}
In the paper \cite{Zaj1} one can find an example of the variational formula for the Cram\'er function of series of weighted symmetric Bernoulli random variables but with coefficients belonging to the space $\ell^1$. In the context of our Theorem \ref{mtw}, we recall the main result of this paper but now with coefficients in the larger space $\ell^2$.

\begin{exa}
\label{przy2}
If $X$ is a symmetric Bernoulli r.v., i.e. $Pr(X=\pm 1)=\frac{1}{2}$, then $Ee^{sX}=\cosh s$. By power series expansions one has $\cosh s\le exp(\frac{s^2}{2})$. In this example, conversely as in previous one, we get that $\psi_X(s)\le \psi_g(s)=\frac{s^2}{2}$ and
\begin{equation}
\label{nierB}
\psi_X^\ast(\alpha)\ge \frac{\alpha^2}{2}.
\end{equation}
One can check that
$$
\psi_X^\ast(\alpha)=\frac{1}{2}[(1+\alpha)\ln(1+\alpha)+(1-\alpha)\ln(1-\alpha)]
$$
for $|\alpha|\le 1$ and $\infty$ otherwise; we take $0\ln 0=0$. Note that $\psi_X^\ast(\pm 1)=\ln 2$.
 
For a sequence of independent Bernoulli r.v.s, by the above inequality, we have 
$$
(\psi^1)^\ast({\bf b})\ge \frac{1}{2}\Vert {\bf b} \Vert^2.
$$ 
Since $(\psi^1)^\ast$ is lower semicontinuous in the weak* topology,  level sets $\{{\bf b}\in\ell^2:\;(\psi^1)^\ast({\bf b})\le c\}$ are weak* closed; for $c<0$ are empty sets. By the above, for each $c\ge 0$  the level set of $(\psi^1)^\ast$ is contained in a closed ball 
$\overline{B}({\bf 0};\sqrt{2c})=\{{\bf b}\in\ell^2:\;\Vert {\bf b}\Vert\le\sqrt{2c}\}$. By virtue of the Banach-Alaoglo theorem (see \cite[Th. 3.15]{Rudin}) balls $\overline{B}({\bf 0};\sqrt{2c})$ are weak* compact (are polar sets of balls $\overline{B}({\bf 0};1/\sqrt{2c})$). It follows that the level sets
$\{{\bf b}\in\ell^2:\;(\psi^1)^\ast({\bf b})\le c\}$ are weak* compact as  closed subsets of compact sets. It means that in this topology $(\psi^1)^\ast$ is a good rate function and by the contraction principle (see \cite[Th.4.2.1]{DemZei}) $\varphi_{\bf t}$ is also good a rate function on $\mathbb{R}$, in particular, it is lower semicontinuous.

Let us emphasize that if we know that $(\psi^1)^\ast$ is a good rate function then we can prove lower-semicontinuity of $\varphi_{\bf t}$.
\end{exa}
\begin{rem}
\label{uw}
Let assume now that $I$ is a finite set and $Y_i=t_iX_i$ then $X_{\bf t}=\sum_{i\in I} Y_i$. Note that 
$$
\psi_{\bf t}(s)=\sum_{i\in I}\psi_{Y_i}(s).
$$
The functions $\psi_{Y_i}$, $i\in I$, are convex, lower semicontinuous, not identically equals $+\infty$ and continuous at $0$ that is ones satisfy the assumption of Theorem 1 \cite{HiU}. The convex conjugate of their sum is given by the infimal convolution 
\begin{equation}
\label{infconv}
\psi_{\bf t}^\ast(\alpha)=\min_{\sum\alpha_i=\alpha}\sum_{i\in I}\psi_{Y_i}^\ast(\alpha_i).
\end{equation}
Since $\psi_{Y_i}(s)=\psi_{t_iX_i}(s)=\psi_{X_i}(t_is)$, by scaling property (\ref{jedn}), one has 
$$
\psi_{Y_i}^\ast(\alpha_i)=\psi_{t_iX_i}^\ast(\alpha_i)=\psi_{X_i}^\ast\Big(\frac{\alpha_i}{t_i}\Big).
$$
Substituting $\alpha_i=b_it_i$ into (\ref{infconv}) we obtain
$$
\psi^\ast_{\bf t}(\alpha)=\min_{\sum b_it_i=\alpha}\sum_{i\in I}\psi_{X_i}^\ast(b_i).
$$
It means that Theorem \ref{mtw} one can treat as  the special case of a generalization of the infimal convolution to the situation of infinite numbers of summands.

\end{rem}


\begin{thebibliography}{                    }

\bibitem{BarPre}
V. Barbu, T. Precupanu, {\it Convexity and Optimization in Banach Spaces}, 4th ed., Springer Monographs in Mathematics, Springer, Dordrecht (2012)

\bibitem{BaCom}
H.H. Bauschke, P.L. Combettes, {\it Convex Analysis and Monotone Operator Theory in Hilbert Spaces}, Springer Sciences+Business Media, (2011)



\bibitem{DemZei}
A. Dembo, O. Zeitouni, {\it Large Deviations Techniques and Applications}. Corrected reprints of the second (1998) edition, Stochastic Modeling and Applied Probability, 38,
Springer-Verlag, Berlin  (2010)

\bibitem{DiV} 
Donsker M.D., Varadhan S.R.S., {\it Asymptotic evaluation of certain Markov process expectations for large time III}, Comm. Pure Appl. Math, 29, 389-461 (1976)



\bibitem{EkeTem}
I. Ekeland, R. T\'emam, {\it Convex Analysis and Variational Problems}, Translated from French. Corrected reprint of the 1976 English edition.
Classics in Applied Mathematics, 28, Society for Industrial and Applied Mathematics (SIAM), Philadelphia, PA, (1999)

\bibitem{HiU}
J.-B. Hiriart-Urruty, {\it  A note on the Legendre-Fenchel transform of convex composite functions},  Nonsmooth mechanics and analysis, 35-46, Adv. Mech. Math., 12, Springer, New York 2006.

\bibitem{Holl}
F. den Hollander, {\it Large Deviations}, Fields Institute Monographs, 14, Providence, RI: American Mathematical Society (2000)


\bibitem{OZ3} 
Ostaszewska U., Zajkowski K., {\it Legendre-Fenchel transform of the spectral exponent of analytic functions of weighted composition operators}, J. Convex Anal. 18 (2),  367-377 (2011)

\bibitem{Rudin}
W. Rudin, {\it Functional Analysis}, 2nd ed., International Series in Pure and Applied Mathematics (1991)


\bibitem{Tal2}
M. Talagrand, {\it Majorizing measures: the generic chaining}, Ann. Probab. 24, 1049-1103 (1996)


\bibitem{Zaj1}
K. Zajkowski, {\it Cram\'er transform of Rademacher series}, Positivity 19, 529-537 (2015)

\bibitem{Zaj2}
K. Zajkowski, {\it Convex conjugates of analytic functions of logarithmically convex functional}, J. Convex Anal. 20 (1), 243-252 (2013) 


\end{thebibliography}
\end{document}